\documentclass{amsart}

\usepackage{amssymb}
\usepackage[all,cmtip]{xy}
\usepackage{graphicx}
\usepackage[usenames,dvipsnames]{color}
\usepackage{tikz}
\usepackage{hyperref}
                   %





\newtheorem{theorem}{Theorem}
\newtheorem{proposition}{Proposition}
\newtheorem{corollary}{Corollary}

\newtheorem{lemma}{Lemma}
\newtheorem*{problem}{Problem}


\newcommand\calT{{\mathcal T}}


















\newcommand{\clos}{\operatorname{clos}}


\newcommand {\R}{\mathbb{R}}

\newcommand {\Z}{\mathbb{Z}}


\newcommand{\singsupp}{\operatorname{sing supp}}

\usepackage{url}
\usepackage{tikz}
\usetikzlibrary{calc}
\usetikzlibrary{decorations.markings}
\usetikzlibrary{through}
\definecolor{grey}{rgb}{0.8,0.8,0.8}
\definecolor{l.grey}{rgb}{0.9,0.9,0.9}
\definecolor{ll.grey}{rgb}{0.92,0.92,0.92,}
\definecolor{d.green}{rgb}{0,0.6,0.3}
\definecolor{l.green}{rgb}{0.8,0.99,0.69}
\definecolor{d.blue}{rgb}{0.2,0.2,0.7}
\definecolor{d.red}{rgb}{0.7,0,0}
\definecolor{myblue}{HTML}{92dcec}
\definecolor{l.blue}{rgb}{0.8,0.8,0.99}
\definecolor{ll.blue}{rgb}{0.4,0.6,0.6}
\definecolor{lll.blue}{rgb}{0.5,0.8,0.8}
\definecolor{d.orange}{rgb}{0.69,0.1,0}
\usepackage{xcolor}
\definecolor{light-gray}{gray}{0.8}

\begin{document}
\title{Hearing the shape of a triangle}
\author{Daniel Grieser}
\address{Institut f\"ur Mathematik, Carl von Ossietzky Universit\"at Oldenburg, 26111 Oldenburg, Germany}
\email{daniel.grieser@uni-oldenburg.de}

\author{Svenja Maronna}
\address{Institut f\"ur Mathematik, Carl von Ossietzky Universit\"at Oldenburg, 26111 Oldenburg, Germany}
\email{svenja.maronna@uni-oldenburg.de}
\keywords{Inverse spectral problem, heat trace asymptotics, convexity, partial fraction expansion}
\subjclass[2010]{
 35P99,   
35R30,   
51N20   
}
\maketitle

\begin{abstract}
In 1966 Mark Kac asked the famous question \lq Can one hear the shape of a drum?\rq. While this was later shown to be false in general, it was proved by C. Durso that one can hear the shape of a triangle. After an introduction to the general inverse spectral problem we will give a new proof of this fact. The central point of the argument is to show that area, perimeter and the sum of the reciprocals of the angles determine a triangle uniquely. This is proved using convexity arguments and the partial fraction expansion of $\sin^{-2}x$.
\end{abstract}

\section{Introduction}
The question \lq Can one hear the shape of a drum?\rq\ asked by Mark Kac in 1966  \cite{Kac:COHSD}  has attracted and inspired many mathematicians.
The methods used to understand this problem draw on diverse areas, for example partial differential equations, dynamical systems, group theory, number theory, and probability. In this article we will review some of the history and state of the art of the problem, and add a new twist to the story which leads to a curious elementary geometric problem about triangles, which we then solve.

Let us state the problem precisely. 
For a domain (bounded open set) $\Omega\subset\R^2$ consider the problem of finding a function $u$ on the closure of $\Omega$, vanishing at the boundary $\partial\Omega$, and a number $\lambda\in\R$ satisfying
\begin{align*}
 -\Delta u &= \lambda u
\end{align*}
in $\Omega$, where $\Delta  := \frac{\partial^2}{\partial x^2} + \frac{\partial^2}{\partial y^2}$ is the Laplace operator.
We call $\lambda$ a {\em Dirichlet eigenvalue} of $\Omega$ if there is a solution $u\not\equiv 0$. Multiplying the equation by $u$ and integrating by parts (i.e., using Green's identity) one sees that any eigenvalue  must be positive, and using basic techniques from PDEs and functional analysis one can show  (see \cite{Cha:ERG}) that the set of eigenvalues is an infinite discrete subset of $\R$ and that the eigenspace -- the set of solutions $u$ -- corresponding to each eigenvalue is finite dimensional. Hence one may write the eigenvalues as a sequence $0<\lambda_1\leq\lambda_2\leq \lambda_3\dots\to\infty$, where each eigenvalue is repeated according to the dimension of its eigenspace.

In this way a sequence of numbers $\lambda_1,\lambda_2,\dots$ is associated to each domain $\Omega$. This begs for mathematical investigation. Can we calculate the $\lambda_k$? No, except in very few cases, for example rectangles, the disk\footnote{Here 'calculate' is not to be taken literally: the eigenvalues are the squares of the zeroes of the Bessel functions}, certain triangles. Can we say anything interesting on how the eigenvalues depend on the shape of $\Omega$? Yes. This is the subject of the mathematical discipline called spectral geometry (see \cite{Ben:DE} for a short introduction and more references, and also \cite{Cha:ERG} and \cite{Lau:TFRSBEL}). We can also pose the {\em inverse problem}: Is the domain $\Omega$ determined uniquely by its eigenvalue sequence? Of course two congruent domains have the same eigenvalue sequence (we say they are {\em isospectral}), but do any two isospectral domains have to be congruent?
This is the question of Kac mentioned at the beginning, for the following reason:
Think of $\Omega$ as a drum, i.e. a membrane which is stretched over a wire frame in the shape of $\partial\Omega$. The membrane
can vibrate freely except that it is fixed at the boundary. When the drum vibrates you will hear a sound, which is composed of tones of various frequencies. These frequencies are the numbers $\gamma\sqrt{\lambda_k}$, where $\gamma$ is a constant depending on the material and tension of the drum.\footnote{This is an idealized physical model, for real drums the frequencies are slightly different due to non-linear effects and the influence of the resonance chamber.} So if you know $\gamma$ then in this sense you can 'hear' the eigenvalues $\lambda_k$. Without that knowledge you can still hear the quotients $\sqrt{\lambda_k/\lambda_1}$, which correspond to the musical intervals between the overtones and the fundamental tone of the drum's sound.

The problem may be easily generalized to higher dimensions and to compact Riemannian manifolds (with or without boundary). Already at Kac's time it was known that the answer is NO in the realm of Riemannian manifolds: Milnor had constructed two flat tori of dimension 16 which are isospectral but not isometric (the appropriate notion of congruence for Riemannian manifolds). So the question was whether there could also be a counterexample among domains in the plane.

It took 26 years to  reduce the dimension of counterexamples and make them fit into the plane. The first planar counterexamples were given  in 1992 by C. Gordon, D. Webb and S. Wolpert \cite{GorWebWol:OCHSD}.  Figure \ref{fig:counterexample} shows one of the first examples that were found. Since then many more examples of isospectral Riemannian manifolds, among them continuous families, have been found.
Recent surveys on these constructions are  \cite{GirTha:HSDMPAI} and \cite{GorPerSchue:IIMSTE}.

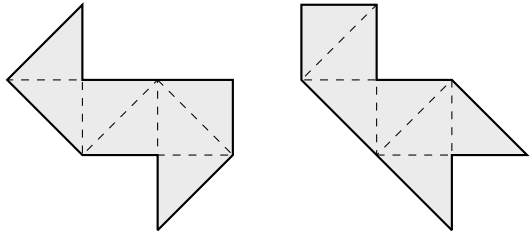
\begin{figure}
 
\begin{minipage}{0.3\textwidth}
\begin{tikzpicture}
 \draw[fill=ll.grey] (2,0) -- (3,1) -- (3,2) -- (1,2) -- (1,3) -- (0,2) -- (1,1) -- (2,1) -- (2,0);
 \draw[thick] (2,0) -- (3,1) -- (3,2) -- (1,2) -- (1,3) -- (0,2) -- (1,1) -- (2,1) -- (2,0);  
 \draw[dashed] (2,2) -- (3,1) -- (2,1) -- (2,2) -- (1,1) -- (1,2) -- (0,2);
\end{tikzpicture}
\end{minipage}
\begin{minipage}{0.3\textwidth}
\begin{tikzpicture}
 \draw[fill=ll.grey] (2,0) -- (2,1) -- (3,1) -- (2,2) -- (1,2) -- (1,3) -- (0,3) -- (0,2) -- (2,0);
 \draw[thick] (2,0) -- (2,1) -- (3,1) -- (2,2) -- (1,2) -- (1,3) -- (0,3) -- (0,2) -- (2,0);  
 \draw[dashed] (1,1) -- (2,1) -- (2,2) -- (1,1) -- (1,2) -- (0,2) -- (1,3);
\end{tikzpicture}
\end{minipage}
\caption{Two drums with the same overtones, see \cite{GorWebWol:OCHSD}. Isospectrality may be proved by transplantation, see \cite{BusConDoy:SPID}, \cite{Cha:DTSS} and \url{http://www.geom.uiuc.edu/docs/research/drums/planar/planar.html}, and \url{http://www.math.udel.edu/~driscoll/research/drums.html
} for pictures of eigenfunctions: For each triangle on the left one prescribes Euclidean motions to three triangles on the right. Then given any eigenfunction on the left drum, one transplants it to the right drum by moving the part of the eigenfunction on each left triangle to the right according to the given motions, and adding (inserting suitable $\pm$ signs) the functions obtained on each right triangle. The motions and signs can be chosen in such a way that the resulting function on the right drum is smooth across the dashed lines and hence an an eigenfunction with the same eigenvalue.}
\label{fig:counterexample}
\end{figure}
\section{What can you hear?}
Rather than focus on counterexamples to Kac's question let's be positive and ask which geometric properties of a domain or Riemannian manifold {\em can} be determined from its eigenvalue sequence. The indirect way in which the eigenvalues arise makes this seem a tough question to attack. However, there is a wonderful idea which helps us. It is the idea of transforms and traces. The two most important ones are the heat trace and the wave trace, corresponding to a sort of Laplace and Fourier transform of the eigenvalue sequence. More precisely, the heat trace is the function
\begin{equation}\label{heat kernel}
 h(t) = \sum_{k=1}^\infty e^{-\lambda_k t},\quad t>0 
\end{equation}
and the wave trace is 
\begin{equation}\label{wave kernel}
 w(t) = \sum_{k=1}^\infty \cos{\sqrt{\lambda_k} t},\quad t\in\R .
\end{equation}
The sum defining $h(t)$ converges for every $t>0$, and $h$ is a smooth function. The sum defining $w(t)$ never converges, but it can be made sense of in the sense of distributions, so $w$ is a distribution on $\R$.
For example if $\lambda_k=k^2$ and we sum over $k\in\Z$ then the Poisson summation formula gives
\begin{equation}
 \label{eqn:poisson}
 w(t)= \sum_{k\in\Z}\cos kt = 2\pi\sum_{l\in\Z} \delta_{2\pi l}(t)
\end{equation}
where $\delta_{2\pi l}$ is the delta distribution sitting at the point $2\pi l$
(to check this formally, simply calculate the Fourier series of the right hand side).
For this article, we will be sloppy about the distinction between functions and distributions.

So why are the functions $h$, $w$ useful for our problem? The reason is that there is a different way of understanding them, and this yields a relation to the geometry of $\Omega$. For $h$ this involves the heat equation
$$ (\partial_t - \Delta)v(t,x) = 0,\quad t>0, \ x\in\Omega$$
where $\partial_t:=\frac{\partial}{\partial t}$.
This equation has a unique solution for any initial data $v(0,x)=f(x)$, if we impose the boundary condition that $v(t,x)=0$ for all $t>0$ and $x\in\partial\Omega$. By separation of variables we obtain
$v(t,x) = \sum_{k=1}^\infty a_k e^{-\lambda_k t} u_k(x)$ where the $u_k$ form an orthonormal basis of $L^2(\Omega)$ of real valued eigenfunctions corresponding to the $\lambda_k$, and $a_k=\int_\Omega f(y)u_k(y)\,dy$.
In other words
$$v(t,x) = \int_\Omega H(t,x,y)f(y)\, dy$$ 
where $H(t,x,y) = \sum_{k=1}^\infty e^{-\lambda_k t} u_k(x) u_k(y)$. 
The function $H:(0,\infty)\times\Omega\times\Omega\to\R$ is called the heat kernel of $\Omega$, and since the $u_k$ are normalized in $L^2$ one sees that
\begin{equation}
 \label{eqn:heat trace}
 h(t) = \int_\Omega H(t,y,y)\, dy 
\end{equation}
This is the trace of the operator $e^{t\Delta}:f\mapsto v(t,\cdot)$, hence the name heat trace for $h$.
Now we observe that, for any fixed $y\in\Omega$, the function $(t,x)\mapsto H(t,x,y)$ is the solution of the heat equation with initial data $f(x)=\delta_y(x)$, that is, it describes the distribution of heat after time $t$, when initially there is a single hot spot at $y$. Although heat spreads at infinite velocity (that is, $H(t,x,y)>0$ for all $x$, no matter how small $t>0$), the value of $H(t,x,y)$ at $x=y$ for $t$ close to zero will be mostly influenced by the geometry of $\Omega$ near the point $y$. A precise analysis of the heat equation shows that for a Riemannian surface $\Omega$ without boundary
$$H(t,y,y)\sim t^{-1}\sum_{j=0}^\infty a_j(y) t^j\quad\text{ as }t\to 0 $$
where each $a_j(y)$ is a universal polynomial in derivatives of the Gauss curvature $K(y)$ of $\Omega$ at $y$. For example, $a_0(y)=\frac1{4\pi}$, $a_1(y) = \frac1{12\pi}K(y)$. 
If $\Omega$ has a boundary then its influence is felt only when the distance of $y$ to the boundary is of order at most $\sqrt t$, and in the integral \eqref{eqn:heat trace} this contributes extra terms involving the curvature of the boundary and terms involving the powers $t^{-1/2+j}$. In the case of planar domains with polygonal boundary there is no curvature, but the corners give a contribution, and this leads to the formula
$$ h(t) = a_0\, t^{-1} + a_{1/2}\, t^{-\frac12} + a_1 + O(e^{-\frac ct})
\quad\text{ as }t\to 0$$
for some constant $c>0$ where
$$ a_0 = \frac{A}{4\pi},\ a_{1/2} = - \frac P{8\sqrt\pi},\ a_1 = \frac1{24}\sum_{i}\left(\frac\pi{\alpha_i} - \frac{\alpha_i}\pi\right) $$
where $A$ is the area, $P$ the perimeter and the $\alpha_i$ are the interior angles of the polygon. This formula was first mentioned in \cite{McKSin:CEL}, the first published proof was given in \cite{BerSri:HERRWPB}.
In the case of the triangle we have $\sum_i \alpha_i=\pi$, so 
$a_1 = \frac\pi{24}\sum\limits_{i=1}^3 \frac1{\alpha_i}
 - \frac1{24}$. Therefore, if we know all the $\lambda_k$ then we know the function $h(t)$ and hence the coefficients $a_0,a_{1/2},a_1$, hence the area, the perimeter and the sum of the reciprocals of the angles of the triangle.
So we can hear these quantities. This motivates the following theorem:
\begin{theorem}\label{thm:triangle}
 A triangle is determined uniquely up to congruence by its area $A$, its perimeter $P$ and the sum $R$ of the reciprocals of its angles.
\end{theorem}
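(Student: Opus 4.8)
The plan is to parametrize triangles by their three angles $\alpha_1,\alpha_2,\alpha_3$ (with $\alpha_1+\alpha_2+\alpha_3=\pi$) together with a scale parameter, and then show that the three quantities $A$, $P$, $R=\sum 1/\alpha_i$ pin down the congruence class. A convenient normalization is to fix the circumradius, say $2\rho=1$, so that the side lengths are $\sin\alpha_i$ and the perimeter and area become explicit trigonometric functions of $(\alpha_1,\alpha_2,\alpha_3)$; the true triangle is then recovered by rescaling using $A$ (or $P$). Concretely, $P/\sqrt{A}$ is a scale-invariant function of the angle triple alone, as is $R$ (which is scale-invariant to begin with), so it suffices to show: \emph{the map sending an unordered triple $(\alpha_1,\alpha_2,\alpha_3)$ in the open simplex $\{\alpha_i>0,\ \sum\alpha_i=\pi\}$ to the pair $\bigl(P/\sqrt A,\ R\bigr)$ is injective up to permutation.} Equivalently, fixing the value of $R$ cuts out a curve (a level set) in the simplex, and along that curve $P/\sqrt A$ must be shown to be strictly monotone, so that it cannot take the same value twice.

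First I would reduce to a one-parameter problem. The level set $\{\sum 1/\alpha_i = R\}$ inside the simplex, modulo the $S_3$-symmetry, is essentially an arc; I would parametrize it and study how $P$ and $A$ vary along it. The natural move is to show that the function $F := \log P - \tfrac12\log A$ is strictly convex (or strictly concave) as a function on the simplex, or at least that its restriction to every level curve of $R$ is strictly monotone. Here is where the two tools advertised in the abstract enter. Convexity of $x\mapsto 1/x$ makes the sublevel sets $\{R \le c\}$ convex, so the level set $\{R=c\}$ bounds a convex region; this gives good control on the geometry of the curve along which we must prove monotonicity. For the monotonicity of $P/\sqrt A$ itself, one computes that the relevant second-order quantity involves $\sum \sin^{-2}\alpha_i$ or $\sum \alpha_i^{-2}$ type expressions, and the partial fraction expansion
$$\frac1{\sin^2 x} = \sum_{n\in\Z} \frac1{(x-n\pi)^2}$$
lets one compare $\sum \sin^{-2}\alpha_i$ against $\sum \alpha_i^{-2}$ termwise, because each summand $1/\sin^2\alpha_i$ dominates its own leading term $1/\alpha_i^2$. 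This is the mechanism by which the "analytic" perimeter/area data and the "algebraic" angle-reciprocal data are forced to be compatible in only one way.

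Concretely the key steps, in order, are: (1) set up the circumradius-normalized formulas $P(\vec\alpha)=\sum\sin\alpha_i$, $A(\vec\alpha)=\tfrac12\sum\sin\alpha_i\sin\alpha_j\sin\alpha_k$-type expression, and note $R$, $P/\sqrt A$ are the full scale-invariant data; (2) observe via convexity of $1/x$ that $\{R=c\}$ is (the boundary of) a convex lens in the simplex, and parametrize the portion in a fundamental domain of $S_3$ by a single variable $t$; (3) differentiate $P/\sqrt A$ along this curve and reduce the sign of the derivative to an inequality among $\sin\alpha_i$, $\cos\alpha_i$, $\alpha_i$; (4) prove that inequality using the partial fraction expansion of $\sin^{-2}x$ (and the elementary bounds $\tfrac{2}{\pi}x \le \sin x \le x$ on $[0,\pi/2]$, plus $\sin x < x$ and concavity of $\sin$ on $[0,\pi]$); (5) handle the boundary/degenerate cases (isoceles configurations, where the curve meets the walls of the fundamental domain) separately, checking that no two of them share both invariants. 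I expect step (4) — turning the derivative-sign condition into a clean inequality and then verifying it termwise via partial fractions — to be the main obstacle, since the monotonicity is genuinely a statement that mixes the arithmetic of the angles with trigonometric side-length data, and getting the inequality into a form where the partial fraction comparison applies cleanly will require some care with the constraint $\sum\alpha_i=\pi$. A secondary nuisance is bookkeeping the $S_3$-symmetry so that "injective up to permutation" is actually what gets proved, rather than mere local injectivity.
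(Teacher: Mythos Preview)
Your strategy---reduce to angle data, use convexity of $1/x$ to make the level sets of $R=\sum 1/\alpha_i$ convex curves in the simplex, prove that the scale-invariant quantity built from $P$ and $A$ is strictly monotone along each such curve (within a fundamental domain for $S_3$), and settle the resulting sign condition via the partial fraction expansion of $\sin^{-2}$---is exactly the paper's approach. The one simplification you are missing, which collapses your steps~(1), (3), (4) into something clean, is the elementary identity $P^2/(4A)=\cot\tfrac{\alpha_1}{2}+\cot\tfrac{\alpha_2}{2}+\cot\tfrac{\alpha_3}{2}$: it replaces your circumradius-normalized $P/\sqrt{A}$ by the symmetric function $f=\sum\cot(\alpha_i/2)$, whose gradient has components $-\tfrac12\sin^{-2}(\alpha_i/2)$, so the Lagrange-multiplier (linear-dependence) condition with $\nabla g=(-\alpha_i^{-2})$ and $\nabla h=(1,1,1)$ reduces at once to showing that $x\mapsto \sin^{-2}x-x^{-2}$ is strictly increasing and strictly convex on $(0,\pi)$, which is read off termwise from $\sin^{-2}x=\sum_{k\in\Z}(x-k\pi)^{-2}$.
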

\begin{corollary}\label{cor}
 One can hear the shape of a triangle among all triangles.
\end{corollary}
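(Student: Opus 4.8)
The plan is to reduce the theorem to a one--variable monotonicity statement and then prove that statement using the partial fraction expansion of $\csc^{2}$ together with a convexity estimate.

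\emph{Reduction.} A triangle is determined up to congruence by its unordered triple of angles $\{\alpha_1,\alpha_2,\alpha_3\}$ (with $\alpha_1+\alpha_2+\alpha_3=\pi$) together with its scale, and for a fixed triple of angles the perimeter is strictly increasing in the scale. Hence it suffices to show that $A$, $P$ and $R$ determine the triple of angles. The link to area and perimeter is the elementary identity
\[
 \cot\tfrac{\alpha_1}{2}+\cot\tfrac{\alpha_2}{2}+\cot\tfrac{\alpha_3}{2}=\frac{P^{2}}{4A},
\]
which follows from $\cot\tfrac{\alpha_i}{2}=(s-a_i)/r$ (with $s=P/2$ the semiperimeter, $a_i$ the side opposite $\alpha_i$, and $r=A/s$ the inradius) by summing and using $\sum_i(s-a_i)=3s-P=s$. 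Writing $\theta_i=\alpha_i/2\in(0,\tfrac\pi2)$ and $\sigma=P^{2}/(4A)$, the triple of any triangle with the given $A,P$ lies on the curve
\[
 C_\sigma=\bigl\{\,(\theta_1,\theta_2,\theta_3):\ \theta_i>0,\ \textstyle\sum_i\theta_i=\tfrac\pi2,\ \sum_i\cot\theta_i=\sigma\,\bigr\}
\]
in the two--dimensional shape space. Since $\theta\mapsto\sum_i\cot\theta_i$ has the equilateral triangle as its only critical point (a strict minimum of value $3\sqrt3$) and tends to $+\infty$ at the boundary of the shape space, for $\sigma>3\sqrt3$ the level set $C_\sigma$ is a single closed curve encircling the equilateral point, and it meets the fundamental domain $T=\{\theta_1\ge\theta_2\ge\theta_3\}$ of the $S_3$--action in a single arc joining two isosceles configurations (the case $\sigma=3\sqrt3$ is trivial). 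It remains to prove that $R=\tfrac12(\theta_1^{-1}+\theta_2^{-1}+\theta_3^{-1})$ is strictly monotone, hence injective, along this arc.

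\emph{A Jacobian and partial fractions.} A tangent vector $v=(v_1,v_2,v_3)$ to $C_\sigma$ satisfies $\sum_i v_i=0$ and $\sum_i\csc^{2}\theta_i\,v_i=0$, while $dR(v)=-\tfrac12\sum_i\theta_i^{-2}v_i$; thus $dR$ is non-zero on the tangent line to $C_\sigma$ exactly when
\[
 D:=\det\begin{pmatrix}1&1&1\\ \csc^{2}\theta_1&\csc^{2}\theta_2&\csc^{2}\theta_3\\ \theta_1^{-2}&\theta_2^{-2}&\theta_3^{-2}\end{pmatrix}\neq0 .
\]
Now the partial fraction expansion $\csc^{2}t=\sum_{n\in\Z}(t-n\pi)^{-2}$ gives $\csc^{2}t=t^{-2}+\psi(t)$ with $\psi(t):=\sum_{n\neq0}(t-n\pi)^{-2}$; subtracting the last row of the matrix from the middle one, $D$ becomes the determinant with middle row $(\psi(\theta_1),\psi(\theta_2),\psi(\theta_3))$, i.e. twice the signed area of the planar triangle with vertices $\Gamma(\theta_1),\Gamma(\theta_2),\Gamma(\theta_3)$, where $\Gamma(t):=(\psi(t),t^{-2})$.

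\emph{The convexity estimate and conclusion.} The claim is that $\Gamma:(0,\tfrac\pi2)\to\R^{2}$ is strictly convex. As $g:=t^{-2}$ is a strictly decreasing bijection of $(0,\tfrac\pi2)$ onto $(4/\pi^{2},\infty)$, this means exactly that $\psi$, regarded as a function of $g$, is strictly convex; one computes $d^{2}\psi/dg^{2}=\tfrac{t^{5}}{4}\bigl(3\psi'(t)+t\psi''(t)\bigr)$, and termwise differentiation of the series yields
\[
 3\psi'(t)+t\psi''(t)=6\pi\sum_{n\neq0}\frac{n}{(t-n\pi)^{4}}=6\pi\sum_{n\ge1}n\Bigl(\tfrac1{(n\pi-t)^{4}}-\tfrac1{(n\pi+t)^{4}}\Bigr)>0
\]
for $0<t<\tfrac\pi2$, since then $0<n\pi-t<n\pi+t$ for all $n\ge1$. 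Hence $\Gamma$ is strictly convex; as $t\mapsto t^{-2}$ is also monotone, three points $\Gamma(\theta_i)$ with distinct $\theta_i$ are never collinear, so $D\neq0$ and, moreover, $\sign D$ depends only on the ordering of $\theta_1,\theta_2,\theta_3$ and hence is constant on the interior of $T$. Therefore $dR$ is nowhere zero along the connected arc $C_\sigma\cap T$, so $R$ is strictly monotone there, which completes the reduction; the corollary then follows because isospectral triangles have the same heat trace and hence the same $A$, $P$, $R$. The heart of the argument is this convexity estimate, together with the deduction that $\sign D$ is constant on $T$; the Morse--type fact that $C_\sigma\cap T$ is a single arc is a secondary point that also needs to be checked.
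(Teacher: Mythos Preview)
Your proof is correct and follows the same overall architecture as the paper: reduce to the angle simplex via the identity $\sum\cot(\alpha_i/2)=P^2/(4A)$, and then show that the $3\times3$ Jacobian with rows $(1,1,1)$, $(\csc^2\theta_i)$, $(\theta_i^{-2})$ is nonzero at non-isosceles points, using the partial fraction expansion $\csc^2 t=\sum_{n\in\Z}(t-n\pi)^{-2}$.

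There are two genuine differences in packaging. First, you interchange the roles of the two functions: the paper takes level curves of $g=\sum 1/\alpha_i$ and shows $f=\sum\cot(\alpha_i/2)$ is monotone along them, while you take level curves of $\sum\cot\theta_i$ and show $R$ is monotone. This is harmless since the determinant condition is symmetric in the two rows. Second, and more interestingly, your proof that $D\neq0$ is different from the paper's. After the same row reduction $\csc^2 t=t^{-2}+\psi(t)$, the paper proves (Lemma~2) that $\psi$ is strictly increasing and strictly convex \emph{in $t$}, and then runs a short case analysis on the coefficients of a linear combination to exclude three zeroes. You instead show that $\psi$ is strictly convex \emph{as a function of $t^{-2}$}, via the neat computation $3\psi'+t\psi''=6\pi\sum_{n\ge1}n\bigl((n\pi-t)^{-4}-(n\pi+t)^{-4}\bigr)>0$, and then observe that $D$ is (twice) the signed area of a triangle with vertices on the strictly convex curve $t\mapsto(\psi(t),t^{-2})$, hence nonzero whenever the $\theta_i$ are distinct. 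This is arguably more direct than the paper's case split. On the other hand, the paper's choice buys something: since $g=\sum 1/\alpha_i$ has diagonal positive-definite Hessian, its level curves are automatically convex, so the ``single arc in each chamber'' statement is immediate. In your version you invoke a Morse-type argument; you could shortcut it by noting that $\cot$ is strictly convex on $(0,\pi/2)$, so $\sum\cot\theta_i$ is strictly convex on the simplex and its level sets are convex curves, exactly as in the paper.
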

That is, if we know that $\Omega$ is a triangle then the spectrum of $\Omega$ determines which triangle it is.

Before we embark on the proof of the theorem, let us digress and tell the remarkable story of the {\em wave kernel}, which is a much more powerful tool in spectral geometry than the heat kernel -- at the cost of harder technical issues in its analysis. The main idea, however, is easy to understand. The wave kernel
was used by  C. Durso in her first proof of  Corollary \ref{cor}, see \cite{Dur:SISPT}.

\subsection*{The wave kernel}
The wave trace $w(t)$ can be obtained in the same way as the heat trace, but starting with the wave equation
$$(\partial_t^2-\Delta)u(t,x)=0,\quad t\in\R,\ x\in\Omega $$
with initial data $u(0,x)=f(x)$, $(\partial_tu)(0,x)=0$ and boundary values $u(t,x)=0$ for all $t\in\R$, $x\in\partial\Omega$. This equation has a unique solution for each $f$, and it describes vibrations of $\Omega$, or propagation of waves on $\Omega$, with initial shape $f$. Again the solution can be written in the form 
$u(t,x) = \int_\Omega W(t,x,y)f(y)\, dy$ where now $W$ is a distribution, and
\begin{equation}
 \label{eqn:wave trace}
 w(t) = \int_\Omega W(t,y,y)\, dy, 
\end{equation}
the trace of the operator $\cos{t\sqrt{-\Delta}}:f\mapsto u(t,\cdot)$.

How can we learn anything about the function $W(t,x,y)$? It may help to think of $\Omega$ as a lake. At time $t=0$ we drop a stone into the lake at the place $y$ -- this corresponds to the initial condition $f(x)=\delta_y(x)$ -- and observe the resulting waves. In a linear water wave model, $x\mapsto W(t,x,y)$ is the lake's surface at time $t$.
Everyone knows what happens: A circular wave front centered at $y$ will form, its radius increasing linearly with $t$. When it reaches the boundary of the lake, it will be reflected. In our simple model there is no loss of energy and the wave will move on forever. The precise shape of the wave front can be described as follows: Starting at $y$ walk into any direction at speed 1. Always walk straight, except when you hit the boundary. In this case reflect off the boundary according to the law 'angle of incidence = angle of reflection'. The wave front at time $t$ is the set of points that you can reach in this way when walking for time $t$.

This helps us to understand the integrand $W(t,y,y)$ in \eqref{eqn:wave trace}: It will be large only for those times $t$ for which the wave front returns to $y$ after time $t$, i.e. for which there is a path\footnote{Here a 'path' is a succession of straight lines -- or geodesics -- obeying the law of reflection when hitting the boundary of $\Omega$. If $\Omega$ has corners, as for a triangle, then a path running into a corner can leave the corner in any direction.}
 from $y$ to $y$ of length $t$. A more careful analysis then shows that when integrated over $y$ many of these 'large' contributions cancel with neighboring paths due to oscillation. Only contributions from {\em closed paths}, that is those which return to $y$ in the same direction in which they started, are not cancelled in this way. To summarize, we arrive at the conclusion that $w(t)$ is large only for $|t|\in T$, where 
\begin{equation}
 \label{eqn:closed paths}
T = \{\text{lengths of closed paths in }\Omega\}\cup\{0\}.
\end{equation}
Here we also need to  count the 'instant' path of length zero.

The precise mathematical statement of this involves the notion of singular support of a distribution, i.e. the set where the distribution is not given by a smooth function. The wave front at time $t$ is precisely the singular support of the distribution $x\mapsto W(t,x,y)$. The result above translates into the statement 
$$ \singsupp w \subset \clos(T) $$
(see \cite{GuiMel:PSFMWB} for the rather technical proof, and \cite{Col:SLPGTYA} for a survey of the history of this theorem). If there is precisely one path for each $t\in T$ (up to reversal of direction) then the singular support is equal to $\clos(T)$, and it is conjectured that this equality is always true, but this is an open problem. In the example where $\Omega$ is a circle of length $2\pi$ (or alternatively, the interval $[0,2\pi]$ where we impose periodic boundary conditions), this can be seen explicitly: The eigenvalues are $k^2$, $k\in\Z$, and the wave kernel is \eqref{eqn:poisson} -- the numbers $2\pi l$ are precisely the lengths of closed paths in $\Omega$.

So we see that essentially, we can hear the set $T$ of lengths of closed paths on $\Omega$. This analysis can be refined substantially by analyzing the {\em kind} of singularities that the wave trace $w$ has at points of $T$. It turns out that the singularity at $t=0$ carries the same information as the full asymptotic expansion of the heat kernel at $t=0$. The other singularities yield additional information, und using this, one can prove that one can hear generic convex domains with analytic boundary and certain symmetries, see \cite{Zel:ISPADII}, \cite{HezZel:ISPAZSDRn}. As a final remark on this, we would like to mention the remarkable recent work \cite{ColGuiJer:SWTNCPLS} in which for the first time the behavior of $w$ at cluster points of $T$ was analyzed, in the special case of a disk. A recent survey on inverse spectral results obtained using trace formulae and related methods is \cite{DatHez:IPSG}.

To end this section, let us explain Durso's proof that one can hear the shape of a triangle. 

It is classical that in an acute triangle $\Omega$, there is a unique shortest closed path, and it is given by the triangle formed by the base points of the three altitudes of $\Omega$, see Figure \ref{fig:fagnano}.
Therefore, one can hear the length of this path. Durso shows that, in the case of an obtuse or right-angled triangle, the shortest closed path is the shortest altitude, traversed up and down, and that the wave trace $w$ is singular at $l_0$, the length of this path (this is the hard analytical part of the proof). So one can hear $l_0$. Then she shows by an elementary geometric argument that any triangle is determined uniquely by area, perimeter and the length of its shortest closed path.

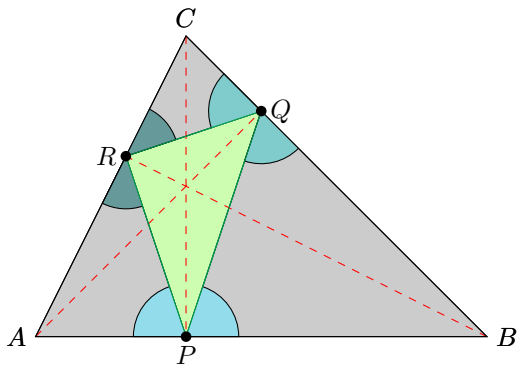
\begin{figure}
 \begin{tikzpicture} 
   \coordinate (a) at (0,0) ;
   \coordinate (b) at (6,0) ;
   \coordinate (c) at (2,4) ; 
   \coordinate[label=below:$P$] (p) at (intersection cs: first line={(a)--(b)},
		second line={(c)--($(a)!(c)!(b)$)}) circle (2pt);
	 \coordinate[label=right:$Q$] (q) at (intersection cs: first line={(b)--(c)},
		second line={(a) -- ($(b)!(a)!(c)$)}) circle (2pt);		 
	 \coordinate [label=left:$R$] (r) at (intersection cs: first line={(c)--(a)},
		second line={(b) -- ($(a)!(b)!(c)$)}) circle (2pt);
		
	 \draw [fill=grey](a)node[left]{$A$} -- (b) node[right]{$B$}-- (c) node[above]{$C$}-- cycle;
	 \draw[fill=l.green] (p) -- (q) -- (r) -- (p) ;
	 \draw[d.green] (p)--(q)--(r) -- (p); 
   \draw[dashed][red] (a) -- ($(b)!(a)!(c)$); 
   \draw[dashed][red] (b) -- ($(a)!(b)!(c)$); 
   \draw[dashed][red] (c) -- ($(a)!(c)!(b)$);
   \draw[fill=myblue] (p) -- +(0:0.7cm) arc (0:71:0.7cm);
   \draw[fill=myblue] (p) -- +(180:0.7cm) arc (180:109:0.7cm);
   \draw[fill=ll.blue] (r) -- +(18.5:0.7cm) arc (18.5:63.5:0.7cm); 
   \draw[fill=ll.blue] (r) -- +(243.5:0.7cm)arc (243.5:288:0.7cm);
   \draw[fill=lll.blue] (q) -- +(135:0.7cm)arc (135:199:0.7cm);
   \draw[fill=lll.blue] (q) -- +(251.5:0.7cm)arc (251.5:315:0.7cm);
   \draw[d.green] (p)--(q)--(r) -- (p);  
   \draw(a)node[left]{$A$} -- (b) node[right]{$B$}-- (c) node[above]{$C$}-- cycle;
      
      \fill (intersection cs:
		first line={(c)--(a)},
		second line={(b) -- ($(a)!(b)!(c)$)}) circle (2pt); 
		\fill (intersection cs:
		first line={(b)--(c)},
		second line={(a) -- ($(b)!(a)!(c)$)}) circle (2pt);
		\fill (intersection cs:
		first line={(a)--(b)},
		second line={(c)--($(a)!(c)!(b)$)}) circle (2pt);
    
\end{tikzpicture}
 \caption{In an acute triangle $\triangle ABC$, how should one choose points $P,Q,R$ on each side so that the triangle $\triangle PQR$ has minimal perimeter? The answer: Choose the base points of the altitudes of $\triangle ABC$. The resulting triangle is called the {\em Fagnano triangle}. There is a clever proof of this fact using reflections of the side $AB$ across the sides $AC$ and $BC$. Also, by a standard variational argument it follows that the circumference of the Fagnano triangle obeys the law of reflection, so an ideal billiard ball on the billiard table $\triangle ABC$ will run forever along this line. 
 }
 \label{fig:fagnano}
\end{figure}
 
\section{A theorem about triangles} \label{proof}
We now return to the proof of Theorem \ref{thm:triangle}. This is a rather peculiar statement: Have you ever heard of {\em reciprocals of angles}? There does not seem to be any geometric meaning to this, and our proof draws on classical analysis rather than geometry. 
Note that in contrast to Durso's proof, our proof only uses tools known in the 1960s.

First, let us remark that it is quite clear that the three quantities $A,P,R$ determine a triangle {\em up to finitely many choices}. This follows easily from the fact that the space of triangles $\calT$ is three dimensional (for example, it may be parametrized by the side lengths), that the functions $A,P,R$ on $\calT$ are analytic and independent (in the sense that none of them can be expressed as a function of the other two; independence in a stronger sense will be proved in Lemma \ref{lem1} below), and that $R$ is a proper function on $\calT/\R_{>0}$, the quotient of $\calT$ by scalings: $R$ tends to infinity when one of the angles tends to zero, which is the only way to leave all compact subsets of $\calT/\R_{>0}$.
However, just as prescribing the lengths of two sides and an angle not enclosed by them determines a triangle only up to two choices, it is not obvious why 
there should be only one triangle with any given $A,P,R$. Of course it is not hard to check this numerically, but it is far from obvious how to prove it analytically. 
This is what we shall do.

\begin{figure}
 \begin{tikzpicture}
		\coordinate (a) at (0,0);
   	\coordinate (b) at (0:5);
   	\coordinate (c) at (intersection cs:first line={(5,0)-- +(145:20)}, second line={(0,0)-- +(75:10)});
		\coordinate (h1) at (intersection cs: first line={ (b)--(c)}, second line={(a)--(37.5:20)});
		\coordinate (h2) at (intersection cs: first line={ (a)--(c)}, second line={(5,0)-- +(162.5:20)});
		\coordinate [label=left: $M$] (m) at (intersection cs: first line={(a)--(h1)}, second line={(b)--(h2)}) circle (2pt);
		\draw [fill=ll.grey](a)node[left]{$A$}--(b)node[right]{$B$}--(c)node[above]{$C$}--(a);
		\draw [dashed][d.blue](m)--(a);
		\draw [dashed][d.blue](m)--(b);
		\draw [dashed][d.blue](m)--(c);
		\draw [dashed][red](m)--($(b)!(m)!(c)$);
		\draw [dashed][red](m)--($(a)!(m)!(b)$);
		\draw [dashed][red](m)--($(c)!(m)!(a)$);
		
		\filldraw[fill=green!20!white, draw=green!50!black]
		(0,0) -- (5mm,0mm) arc (0:37.5:5mm)node[right]{\textcolor{green!50!black}{$ ~\frac{\alpha}{2}$}} -- cycle;
		\coordinate (s2) at (intersection cs: first line={ (a)--(b)}, second line={(m)--($(a)!(m)!(b)$)});
		\node [d.orange][draw,circle through=(s2)] at (m) {};
		\node[red] at (19:1.5){$r$} {};
		\coordinate (s1) at (intersection cs: first line={ (a)--(c)}, second line={(m)--($(a)!(m)!(c)$)});
		\draw (s2) -- +(180:0.2cm) arc (180:90:0.2cm);
		\fill (4:1.15) circle (0.7pt);
\end{tikzpicture}\\

 \caption{Proof of \eqref{eqn:AP}: $A=r\frac P2$ and $\cot\frac\alpha2 + \cot\frac\beta2 + \cot\frac\gamma2=\frac1r\frac P2$}
 \label{fig:triangle-formula}
\end{figure}
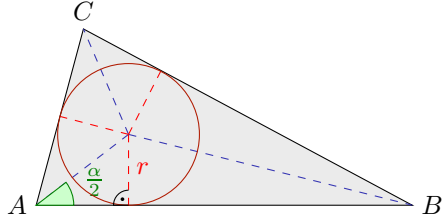
We denote the angles of the triangle by $\alpha,\beta,\gamma$.
We use the following formula from triangle geometry\footnote{We are grateful to Richard Laugesen for pointing out this identity. Amazingly, both sides are also equal to the product $\cot\frac\alpha2  \cot\frac\beta2  \cot\frac\gamma2$; it's a nice little exercise in addition theorems to prove this.}, see Figure \ref{fig:triangle-formula}:
\begin{equation}
 \label{eqn:AP}
 \frac{P^2}{4A} = \cot\frac\alpha2 + \cot\frac\beta2 + \cot\frac\gamma2
\end{equation}
This allows us to work exclusively with angles. We will prove:\begin{proposition}\label{prop}
 A triple $(\alpha,\beta,\gamma)$ of positive real numbers satisfying $\alpha+\beta+\gamma=\pi$ is uniquely determined, up to ordering, by the values of
\begin{align}
 f(\alpha,\beta,\gamma) &= \cot\frac\alpha2 + \cot\frac\beta2 + \cot\frac\gamma2 \\
 g(\alpha,\beta,\gamma) &= \frac1\alpha + \frac1\beta + \frac1\gamma \,. 
\end{align}
\end{proposition}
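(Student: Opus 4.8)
The plan is to pass to the angles and work on the open triangle $\Sigma=\{(\alpha,\beta,\gamma)\;:\;\alpha,\beta,\gamma>0,\ \alpha+\beta+\gamma=\pi\}$, on which the symmetric group $S_3$ acts by permuting the coordinates. Since $x\mapsto\cot(x/2)$ and $x\mapsto 1/x$ are strictly convex on $(0,\pi)$, both $f$ and $g$ are strictly convex and $S_3$-invariant on $\Sigma$ and both tend to $+\infty$ at $\partial\Sigma$; hence each has a unique minimum, attained at the equilateral triple $E=(\pi/3,\pi/3,\pi/3)$. For a value $c\ge f(E)$ the sublevel set $\{f\le c\}$ is compact and strictly convex; if $c>f(E)$ it contains $E$ in its interior, so $C_c:=\{f=c\}$ is a smooth closed convex curve on which $S_3$ acts and which each of the three lines $\{\alpha=\beta\}$, $\{\beta=\gamma\}$, $\{\gamma=\alpha\}$ meets in exactly two points (if $c=f(E)$ then $C_c=\{E\}$). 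Knowing $f$ tells us which curve $C_c$ we are on, so it suffices to show that $g$ restricted to $C_c$ takes each value on a single $S_3$-orbit.

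The first step would be to locate the critical points of $g|_{C_c}$. By the Lagrange criterion a point of $C_c$ is critical exactly when $\nabla g$, $\nabla f$ and $(1,1,1)$ are linearly dependent; using $\partial_\alpha\cot(\alpha/2)=-\tfrac12\sin^{-2}(\alpha/2)$, this is equivalent to
\[
 \det\begin{pmatrix} \alpha^{-2} & \beta^{-2} & \gamma^{-2} \\ \sin^{-2}(\alpha/2) & \sin^{-2}(\beta/2) & \sin^{-2}(\gamma/2) \\ 1 & 1 & 1 \end{pmatrix}=0 ,
\]
which says precisely that the three points $\Gamma(\alpha)$, $\Gamma(\beta)$, $\Gamma(\gamma)$ are collinear in $\R^2$, where $\Gamma(x)=\bigl(x^{-2},\ \sin^{-2}(x/2)\bigr)$ for $x\in(0,\pi)$. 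Hence the whole matter reduces to showing that $\Gamma$ is a strictly convex curve: then no three distinct points of it are collinear, so a critical point of $g|_{C_c}$ must have two of $\alpha,\beta,\gamma$ equal, i.e. lie on one of the three lines above.

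The convexity of $\Gamma$ is where I expect the real work, and where the partial fraction expansion $\sin^{-2}(x/2)=\sum_{n\in\Z}4\,(x-2\pi n)^{-2}$ enters. Writing $\sin^{-2}(x/2)=4x^{-2}+4\rho(x)$ with $\rho(x)=\sum_{n\ne 0}(x-2\pi n)^{-2}$, and using that an invertible affine change of coordinates in $\R^2$ preserves strict convexity of a curve, it is enough to show that $x\mapsto(x^{-2},\rho(x))$ is strictly convex; with $\psi(x)=x^{-2}$ (which is strictly monotone), this amounts to $\psi'\rho''-\psi''\rho'$ having constant sign on $(0,\pi)$. A direct differentiation and simplification gives
\[
 \psi'(x)\,\rho''(x)-\psi''(x)\,\rho'(x) \;=\; -\,\frac{24\pi}{x^{4}}\sum_{n\ne 0}\frac{n}{(x-2\pi n)^{4}} ,
\]
and the series is strictly positive for $0<x<\pi$: grouping $n$ with $-n$ yields $n\bigl[(2\pi n-x)^{-4}-(2\pi n+x)^{-4}\bigr]>0$ for each $n\ge 1$, since $0<2\pi n-x<2\pi n+x$. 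Thus the expression is strictly negative throughout $(0,\pi)$ and $\Gamma$ is strictly convex.

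Granting this, the critical points of $g|_{C_c}$ are exactly the six intersection points of $C_c$ with the lines $\{\alpha=\beta\}$, $\{\beta=\gamma\}$, $\{\gamma=\alpha\}$, and they cut $C_c$ into six arcs on whose interiors $g$ is strictly monotone. By symmetry the three intersection points of one "shape type" (say, the two equal angles exceeding the third) form a single $S_3$-orbit and share a value of $g$, and likewise for the other three; as the two types alternate around $C_c$, each arc is a strictly monotone bijection from its closure onto the interval between these two values (so in particular the two values are distinct). Consequently, for any $d$ the set $g^{-1}(d)\cap C_c$ is either empty, or one of the two orbits of intersection points (when $d$ is one of the two extreme values), or — when $d$ lies strictly between them — a set of six points on which $S_3$ acts without fixed points, hence a single orbit. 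In every case $f$ and $g$ together pin down $(\alpha,\beta,\gamma)$ up to permutation. The one genuinely delicate point is the sign of $\sum_{n\ne 0}n(x-2\pi n)^{-4}$ on $(0,\pi)$; this is exactly what the partial fraction identity buys us, converting an opaque transcendental inequality into a series whose sign is visible term by term, while the convexity bookkeeping on $\Sigma$ and the structure of $C_c$ are routine.
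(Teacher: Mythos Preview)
Your argument is correct and shares the paper's overall architecture: work on the simplex with its $S_3$-symmetry, use strict convexity to see that the level curves of one of the two functions are closed convex curves around the equilateral point, and then show via Lagrange multipliers that the other function has no critical points on the non-isosceles arcs, the analytic core being the partial fraction expansion of $\sin^{-2}$.

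The differences are worth recording. First, you swap the roles of $f$ and $g$: the paper foliates by level curves of $g$ and proves $f$ is monotone along them, while you foliate by level curves of $f$; either works since both functions are strictly convex on the simplex and blow up at the boundary. Second, and more interestingly, you read the vanishing of the $3\times 3$ determinant \emph{columnwise} as collinearity of the three points $\Gamma(\alpha),\Gamma(\beta),\Gamma(\gamma)$ on the planar curve $\Gamma(x)=(x^{-2},\sin^{-2}(x/2))$, and then rule this out by proving $\Gamma$ is strictly convex (via the sign of $\psi'\rho''-\psi''\rho'$, computed termwise from the partial fraction series). The paper instead reads the same determinant \emph{rowwise}: a nontrivial linear relation would give a function $F(y)=a\sin^{-2}(y/2)+by^{-2}+c$ with three zeros, and this is excluded by showing that $G(x)=\sin^{-2}x-x^{-2}$ is both strictly convex and strictly increasing on $(0,\pi)$, which forces any $G_C(x)=\sin^{-2}x-Cx^{-2}$ to be monotone ($C\geq 1$) or strictly convex ($C\leq 1$). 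The paper's route yields a cleaner proof of its key lemma (each term $(x-k\pi)^{-2}$ is convex; evenness gives $G'(0)=0$) but then needs the case split on $C$; your route does a slightly longer series computation but avoids any case analysis, since strict convexity of $\Gamma$ directly gives ``no three collinear points'' in one stroke.
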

Theorem  \eqref{thm:triangle} follows directly from this: if the area $A$ and perimeter $P$ are given then the angles are determined by equation \eqref{eqn:AP} and 
the Proposition, so the triangle is determined up to dilation. Then the given area fixes the dilation factor.

So it remains to prove the Proposition. One way to proceed would be to eliminate one of the variables, say $\alpha$, using the relation $\alpha=\pi-\beta-\gamma$, then eliminate another variable (say $\beta$) from the given value of $g$ by solving a quadratic equation, then plug the expressions for $\alpha$ and $\beta$ into $f$ and investigate the resulting equation for $\gamma$. But this is horrible! Even if it works, it is ugly mathematics. If nothing else, the beautiful symmetry present in the statement of the Proposition is lost. 

Symmetry is a treasure. One should keep it and use it as long as possible. This is what we shall do.

\begin{proof}[Proof of the Proposition]
 Let $D=\{(\alpha,\beta,\gamma): \alpha,\beta,\gamma>0,\ \alpha+\beta+\gamma=\pi\} \subset \R_{>0}^3$ where $\R_{>0}=(0,\infty)$. We think of points of $D$ as 'marked triangles up to dilation', where 'marked' means that we have named the angles in a certain order. The set $D$ is (the interior of) a triangle itself -- the triangle cut out of the plane $\alpha+\beta+\gamma=\pi$ by the positive octant, see Figure \ref{fig:angles}. Points on the dashed lines correspond to isosceles triangles, the center $e$ corresponds to the equilateral triangle. 
 Let us call a point which does not lie on a dashed line a {\em non-isosceles point}. The non-isosceles points form six connected subsets, which we call {\em chambers}.
  The dashed lines are also lines of symmetry:  If we pick a non-isosceles  point  and reflect it step by step across all dashed lines, we obtain six points, one in each chamber; these six points correspond to the same triangle, with angles named in different orders. Each chamber corresponds to one ordering of the angles, for example the lower left chamber to the ordering $\alpha>\beta>\gamma$, or $\alpha\geq\beta\geq\gamma$ when we include its dashed boundary parts.
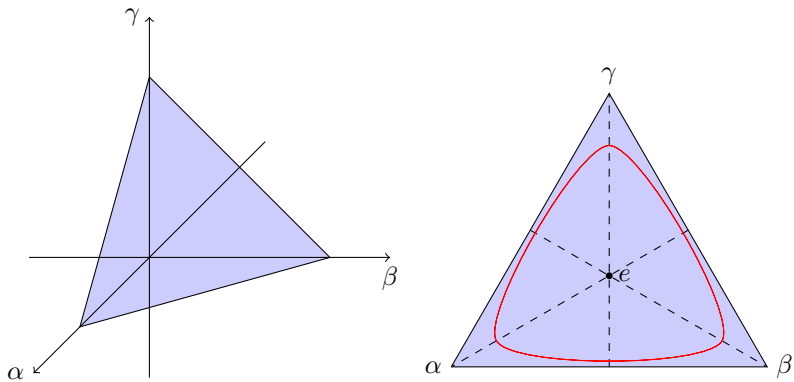
\begin{figure}
\begin{tikzpicture}[scale=0.8]
   
	\draw [fill= l.blue](3,0,0) -- (0,3,0) -- (0,0,3) -- (3,0,0);

\draw[->] (0,-2,0) -- (0,4,0) node[left]{$\gamma$};
   \draw[->] (-2,0,0) -- (4,0,0) node[below]{$\beta$};
   \draw[->] (0,0,-5) -- (0,0,5) node[left]{$\alpha$};
\end{tikzpicture}
 \begin{tikzpicture}[scale=1.0]
 \begin{scope}[thin,black]

	\coordinate (P0) at (0,0);
	\coordinate (P1) at (60:4.2);
	\coordinate (P2) at (0:4.2);
	
	\draw[fill=l.blue] (0,0) node[left]{$\alpha$} --+ (60:4.2) node[above]{$\gamma$} --+ (0:4.2) node[right]{$\beta$} --(0,0);

	\draw[dashed](2.1,0) -- (P1);
	\draw[dashed](60:2.1) -- (P2);
	\draw[dashed](30:3.6373) -- (P0);
	\draw (2.1,1.2124) node[right]{$e$};
	\path[fill] (2.1,1.2124) circle (0.3ex);

\draw[red,smooth cycle] plot coordinates{(0.6,0.34641) (3.6,0.34641) (2.1, 2.944486) (0.6,0.34641) (3.6,0.34641) (2.1, 2.944486)};

 \end{scope}
\end{tikzpicture}
\caption{The space of angles of a triangle, and a level line of $g$}
\label{fig:angles}
\end{figure}

The idea of the proof is to show that the level sets of the function $g$ are convex curves, see Figure \ref{fig:angles}, and that $f$ is strictly monotone along the part of any one of these curves lying in one chamber.

\begin{lemma}\label{lem1} \mbox{}
Consider the functions $f,g$ and $h(\alpha,\beta,\gamma)=\alpha+\beta+\gamma$ on the positive octant $\R_{>0}^3$. 
\begin{enumerate}
 \item[a)] The function $g$ is strictly convex on $\R_{>0}^3$.
 \item[b)]  The gradients $\nabla f, \nabla g, \nabla h$ are linearly independent at all non-isosceles points of $D$. 
\end{enumerate}
\end{lemma}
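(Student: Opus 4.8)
\textbf{Part a)} is immediate: the Hessian of $g=\alpha^{-1}+\beta^{-1}+\gamma^{-1}$ is the diagonal matrix with entries $2/\alpha^3,\,2/\beta^3,\,2/\gamma^3$, which are all positive on $\R_{>0}^3$; hence the Hessian is positive definite and $g$ is strictly convex.

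For \textbf{part b)} I would first translate the linear-independence claim into a one-variable statement about zeros. Writing $u(x)=1/x^2$ and $v(x)=1/\sin^2(x/2)$, we have $\nabla h=(1,1,1)$, $\nabla g=-(u(\alpha),u(\beta),u(\gamma))$ and $\nabla f=-\tfrac12(v(\alpha),v(\beta),v(\gamma))$, so a relation $a\nabla h+b\nabla g+c\nabla f=0$ says precisely that the function $F(x)=a+b'u(x)+c'v(x)$ (with $b',c'$ rescalings of $b,c$) vanishes at all three angles $\alpha,\beta,\gamma$. At a non-isosceles point of $D$ these three angles are distinct and lie in $(0,\pi)$, so it suffices to show: for every $(a,b',c')\neq 0$, $F$ has at most two zeros in $(0,\pi)$. (If $b'=c'=0$ then $a=0$, so $(b',c')\neq 0$.)

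To do this I would substitute $t=u(x)=1/x^2$, a decreasing bijection $(0,\pi)\to(1/\pi^2,\infty)$, under which zeros of $F$ correspond to zeros of $\Ftilde(t)=a+b't+c'\phi(t)$, where $\phi(t)=v(1/\sqrt t)=1/\sin^2\!\bigl(\tfrac1{2\sqrt t}\bigr)$. If $c'=0$, $\Ftilde$ is a non-constant affine function with at most one zero; if $c'\neq 0$, it is enough to know that $\phi$ is \emph{strictly convex} on $(1/\pi^2,\infty)$, for then $\Ftilde$ is strictly convex or strictly concave and has at most two zeros. Thus everything reduces to the convexity of $\phi$.

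This is where the partial fraction expansion $1/\sin^2 s=\sum_{n\in\Z}(s-n\pi)^{-2}$ enters: with $s=1/(2\sqrt t)$ it gives $\phi(t)=\sum_{n\in\Z}4t\,(1-2n\pi\sqrt t)^{-2}$. The $n=0$ term is the affine function $4t$, and for $n\geq1$ I would pair the $n$-th and $(-n)$-th terms. A short computation gives $\frac{d^2}{dt^2}\bigl[t(1-a\sqrt t)^{-2}\bigr]=\frac{3a}{2\sqrt t\,(1-a\sqrt t)^4}$, so with $a=2n\pi$ the paired contribution has second derivative (up to the positive factor $4$) $\frac{3a}{2\sqrt t}\bigl[(1-a\sqrt t)^{-4}-(1+a\sqrt t)^{-4}\bigr]$, which is positive on $(1/\pi^2,\infty)$ since there $a\sqrt t>1$ and hence $|1-a\sqrt t|<1+a\sqrt t$. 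Summing over $n$ (the series and its first two derivatives converge locally uniformly) yields $\phi''>0$. The convexity of $\phi$ is the only real work here; once it is in place the rest is formal. The one subtlety to watch is that the singularities of $\phi$ sit at $t=1/(4n^2\pi^2)$, all strictly less than $1/\pi^2$ — which is exactly why the statement is about genuine triangles (angles $<\pi$) and not about arbitrary positive triples.
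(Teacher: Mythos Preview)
Your proof is correct. Part a) is identical to the paper's. For part b) both you and the paper make the same opening reduction: a linear dependence would force a function of the form $a+b'/x^{2}+c'/\sin^{2}(x/2)$ to have three distinct zeros in $(0,\pi)$. From there the arguments diverge. The paper stays in the variable $x$ and invokes its Lemma~\ref{lem2}, that $G(x)=\sin^{-2}x-x^{-2}$ is both strictly increasing \emph{and} strictly convex on $(0,\pi)$; writing $F$ as an affine image of $G_C(x)=\sin^{-2}x-C\,x^{-2}$, it splits into the cases $C\leq 1$ (convex) and $C\geq 1$ (monotone). You instead substitute $t=1/x^{2}$, which linearizes the $b'$-term and reduces everything to a \emph{single} convexity claim, that of $\phi(t)=\sin^{-2}\bigl(1/(2\sqrt t)\bigr)$. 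Both routes rest on the partial fraction expansion of $\sin^{-2}$. The paper's use of it is slicker---each summand $(x-k\pi)^{-2}$ is transparently convex on $(0,\pi)$, and evenness plus regularity at $0$ give monotonicity for free---whereas your route needs the explicit second-derivative computation and the $n\leftrightarrow -n$ pairing. In exchange, your substitution buys a cleaner case structure (no monotone/convex split) and dispenses with the monotonicity half of Lemma~\ref{lem2} entirely.
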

Let us finish the proof of the Proposition and then return to prove Lemma \ref{lem1}. 
The strict convexity of $g$ implies that the sublevel set $G_{\leq s}=\{p\in \R_{>0}^3:g(p)\leq s\}$ is strictly convex for any $s>0$, with boundary the level surface $G_s=\{p\in \R_{>0}^3:g(p) = s\}$. Furthermore, these sets are symmetric under all permutations of the coordinates. These properties then also hold for the intersections of the sublevel and level sets with the plane $\alpha+\beta+\gamma=\pi$. Since $g(p)\to\infty$ when $p$ approaches the boundary of $D$ (i.e. when at least one of the angles tends to zero), it follows that the sets $G_s\cap D$ are either closed curves in the interior of $D$ which encircle the point $e$, or the point $e$, or empty. Since the equilateral triangle has $g(e)=\frac9\pi$, the first case corresponds to $s>\frac9\pi$.

In particular, we see that the point $e$ is already determined by the value of $g$ alone\footnote{This can also be seen from the arithmetic-harmonic mean inequality: $3\left(\frac1\alpha+\frac1\beta+\frac1\gamma\right)^{-1} \leq \frac{\alpha+\beta+\gamma}3$ with equality iff $\alpha=\beta=\gamma$. 
}.

Now consider any level curve $G_s\cap D$ with $s>\frac9\pi$. Consider the arc of the curve running inside one chamber, with endpoints $p$, $q$ corresponding to isosceles triangles. Our proof will be complete if we can show that $f$ is strictly monotone along this part of the curve. 

Suppose $f$ was not strictly monotone. Then there would be a point $r$ on this arc, different from $p$ and $q$, where $f$ is stationary, that is, the derivative of $f$ along the arc vanishes at $r$. By the Lagrange multiplier theorem this would mean that $\nabla f (r)$ is a linear combination of $\nabla g(r)$ and $\nabla h(r)$. But this would be  a contradiction to part b) of the Lemma. This completes the proof of the Proposition.
\end{proof}
\begin{proof}[Proof of Lemma \ref{lem1}]
a)
 The Hessian (matrix of second derivatives) of $g$ is the diagonal matrix with entries 
 $ \frac2{\alpha^3}$, $\frac2{\beta^3}$, $\frac2{\gamma^3}$ on the diagonal. 
This is clearly positive definite for all $(\alpha,\beta,\gamma)\in\R_{>0}^3$, and this implies that $g$ is strictly convex.

b)
We have 
\[
\renewcommand\arraystretch{1.2}
\nabla f = -\frac12 
\begin{pmatrix}
 \frac1{\sin^2\frac\alpha2} \\
  \frac1{\sin^2\frac\beta2} \\
   \frac1{\sin^2\frac\gamma2}
\end{pmatrix}
,\ \nabla g = -
\begin{pmatrix}
 \frac1{\alpha^2}  \\
 \frac1{\beta^2} \\
 \frac1{\gamma^2}
\end{pmatrix}
,\ \nabla h = 
\begin{pmatrix}
 1 \\ 1 \\ 1
\end{pmatrix}
\]
Suppose there is a non-isosceles point $(\alpha,\beta,\gamma)$ (i.e. these numbers are pairwise different) and numbers $R,S,T$, not all zero, with $R\nabla f + S\nabla g + T\nabla h = 0$. This would mean that the function $$F(y)= -\frac R2 \frac1{\sin^2\frac y2} - S\frac1{y^2} + T$$ had three different zeroes in the interval $(0,\pi)$, namely $y=\alpha$, $y=\beta$ and $y=\gamma$. In order to show that this cannot happen we prove that the function $F$ is a non-zero constant, or strictly monotone, or strictly concave or convex on this interval, depending on the values $R,S,T$. We use the following fact, proved below:
\begin{lemma}\label{lem2}
 The function $\displaystyle G(x) = \frac1{\sin^2x}-\frac1{x^2}$ is strictly increasing and strictly convex on the interval $(0,\pi)$.
\end{lemma}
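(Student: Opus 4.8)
The plan is to establish monotonicity and convexity of $G(x)=\sin^{-2}x-x^{-2}$ on $(0,\pi)$ separately, in each case by reducing the inequality to one about a single trigonometric expression via a power-series or partial-fraction argument rather than brute differentiation. The key tool, suggested by the abstract, is the partial fraction expansion
\begin{equation*}
 \frac1{\sin^2 x} = \sum_{n\in\Z} \frac1{(x-n\pi)^2}.
\end{equation*}
Isolating the $n=0$ term gives immediately
\begin{equation*}
 G(x) = \sum_{n\neq 0} \frac1{(x-n\pi)^2} = \sum_{n=1}^\infty \left(\frac1{(x-n\pi)^2}+\frac1{(x+n\pi)^2}\right),
\end{equation*}
a series converging locally uniformly on $(-\pi,\pi)$, so that $G$ extends to a smooth even function there and may be differentiated term by term.

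First I would prove strict convexity: differentiating the series twice gives $G''(x)=\sum_{n\neq 0} \frac{6}{(x-n\pi)^4}>0$ on $(-\pi,\pi)$, which is manifestly positive, so $G$ is strictly convex on $(0,\pi)$ — in fact on all of $(-\pi,\pi)$. This is the easy half.

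For strict monotonicity, term-by-term differentiation gives $G'(x)=\sum_{n\neq 0} \frac{-2}{(x-n\pi)^3}$. On $(0,\pi)$ one pairs the $\pm n$ terms: for each $n\ge 1$,
\begin{equation*}
 \frac{-2}{(x-n\pi)^3}+\frac{-2}{(x+n\pi)^3} = \frac{-2\big[(x+n\pi)^3+(x-n\pi)^3\big]}{(x^2-n^2\pi^2)^3} = \frac{-2\big(2x^3+6x n^2\pi^2\big)}{(x^2-n^2\pi^2)^3}.
\end{equation*}
Since $0<x<\pi\le n\pi$, the denominator $(x^2-n^2\pi^2)^3$ is negative while the numerator $-2(2x^3+6xn^2\pi^2)$ is negative, so each paired term is positive; hence $G'(x)>0$ on $(0,\pi)$, giving strict monotonicity. (Alternatively, since $G$ is even and strictly convex on $(-\pi,\pi)$ with $G'(0)=0$, it is automatically strictly increasing on $(0,\pi)$ — so monotonicity is in fact a consequence of convexity plus evenness, and one need not compute $G'$ at all.)

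The main obstacle is purely one of rigor: one must justify that the partial fraction series for $\sin^{-2}x$ holds and that it may be differentiated termwise. This is standard — the series $\sum_{n\neq 0}(x-n\pi)^{-2}$ converges locally uniformly on compact subsets of $\R\setminus\pi\Z$, as do all its termwise derivatives, and the resulting function has the same poles and periodicity as $\sin^{-2}x$ with vanishing difference at, say, $x=\pi/2$ after a Liouville-type argument; this is the classical Mittag-Leffler expansion. Given that, everything reduces to the sign inspections above. An entirely elementary alternative, avoiding the expansion altogether, is to use the Taylor series $\frac{x}{\sin x}=1+\frac{x^2}{6}+\frac{7x^4}{360}+\cdots$ with all coefficients positive (this positivity being itself the classical fact to be invoked), square it, and read off that $\frac{x^2}{\sin^2 x}=1+\frac{x^2}{3}+\cdots$ has all Taylor coefficients positive; then $G(x)=x^{-2}\big(\tfrac{x^2}{\sin^2x}-1\big)$ is a power series in $x^2$ with positive coefficients, which gives both strict monotonicity and strict convexity on $(0,\pi)$ at once — provided one has checked the radius of convergence reaches $\pi$, which it does since the nearest singularity of $\sin^{-2}x$ is at $x=\pm\pi$.
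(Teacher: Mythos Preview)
Your proof is correct and follows essentially the same route as the paper: the partial fraction expansion $\sin^{-2}x=\sum_{k\in\Z}(x-k\pi)^{-2}$ yields $G(x)=\sum_{k\neq 0}(x-k\pi)^{-2}$, whence strict convexity is immediate from term-by-term convexity, and strict monotonicity then follows from evenness and $G'(0)=0$ --- exactly the paper's argument (your direct pairing computation for $G'$ is a valid but unnecessary extra). Your Taylor-series alternative is a genuinely different route not taken in the paper's main proof, though the paper does sketch a separate ``pedestrian'' argument via polynomial bounds on $\sin x$.
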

This lemma implies that the function $G_C(x)=\frac 1{\sin^2 x} - \frac C{x^2}$ is, on the interval $(0,\pi)$, strictly increasing for $C\geq 1$ and strictly convex for $C\leq 1$, since $G_C(x) = G(x) + \frac{1-C}{x^2}$ and the function $\frac{1-C}{x^2}$ is increasing for $C>1$ and convex for $C<1$.
Now clearly for any values of $R,S,T$ we can write $F(y)$ as a constant multiple of $G_C(\frac y2)$, for some $C$, plus a constant, and the claim follows.
\end{proof}
\begin{proof}[Proof of Lemma \ref{lem2}]
First note that this is non-trivial: It is easy to check that both $\frac1{\sin^2x}$ and $\frac1{x^2}$ have positive second derivative whenever they are defined, hence are convex, but it is not clear why their difference should be convex. However, things become very transparent when we use the series representation (partial fraction expansion)
$$ \frac1{\sin^2x} = \sum_{k=-\infty}^\infty \frac1{(x-k\pi)^2} $$
which follows from the well-known partial fraction expansion of the cotangent by differentiation.
This yields $G(x) =  \sum\limits_{k\neq 0} \frac1{(x-k\pi)^2}$. Now every summand $\frac1{(x-k\pi)^2}$ is strictly convex on $(0,\pi)$ since the function $\frac1{x^2}$ is strictly convex on both half lines $x<0$ and $x>0$, so $G$ is strictly convex. Furthermore, the series shows that $G$ is  regular at $x=0$, and it is also even, so $G'(0)=0$. Combined with strict convexity this implies that $G$ is strictly increasing on the interval $(0,\pi)$, which was to be shown.
%
%
\end{proof}

\section{Further remarks}\label{sec3}
Let us take another look at the proof of Theorem \ref{thm:triangle}, from a slightly different perspective.
Proposition \ref{prop}, which implies Theorem \ref{thm:triangle} by elementary triangle formulas, may be restated as saying that the map $\Phi=(f,g): D\to\R^2$ is injective on the closure in $D$ of each chamber. The proof of injectivity has two ingredients: First, Lemma \ref{lem1} b), which may be restated as saying that the differential of the map $\Phi$ is invertible in the chamber and hence, by the inverse mapping theorem, that $\Phi$ is locally injective everywhere, that is, every point of the chamber has a neighborhood on which $\Phi$ is injective. Second, the convexity of Lemma \ref{lem1} a) allows to infer global injectivity from this local statement. Finally, the analytic core of the whole argument is Lemma \ref{lem2} which is used in the proof of Lemma \ref{lem1} b). We now take another look at this.

\subsection*{A different proof of Lemma \ref{lem2}}  While the proof using the partial fraction representation is very elegant, you might wonder if there  is a more pedestrian way to prove convexity of $G$. Indeed there is. Here is a sketch. It was our first proof of this result, and it is the result of the Bachelor's thesis of the second author.
A short calculation gives $\frac12 G''(x) = \frac3{\sin^4 x} - \frac2{\sin^2x} - \frac3{x^4}$. We need to show that this is positive (here and in the sequel we always assume $x>0$). This is equivalent to the inequality
\begin{equation}
\label{eqn convex} 
 3\sin^4x + 2x^4\sin^2x \overset{!}{<} 3x^4
\end{equation}
How can one prove an inequality involving trigonometric functions and polynomials? Maybe your first idea is to use the well-known inequality $\sin x< x$ to get rid of the sines. But clearly this does not help since $3x^4 + 2x^4\cdot x^2 > 3x^4$. How can we do better? 

Recall where the inequality $\sin x<x$ comes from: $x$ is the first term in the Taylor series of $\sin x$, the next term is negative. Of course this is not a proof, but it's the core idea, which can be turned into a proof as follows: 
The function $f(x)=x-\sin x$ vanishes at $x=0$ and has derivative $f'(x)=1-\cos x$, which is always non-negative, and is positive for small positive $x$. Thus, $x-\sin x>0$ for all positive $x$ follows by integration: $f(x)=\int_0^x f'(t)\,dt > 0$.

So in order to prove \eqref{eqn convex} we can try to use a better estimate for $\sin x$ by using more terms from its Taylor series. We have the estimate
\begin{equation}
\label{eqn sin} 
 \sin x < x - \frac{x^3}6 + \frac{x^5}{120}
\end{equation}
This can proved in the same way as $\sin x<x$: The function $f(x)=x - \frac{x^3}6 + \frac{x^5}{120} -\sin x$ satisfies $f(0)=f'(0)=f''(0)=f'''(0)=f^{(4)}(0)=0$ and $f^{(5)}(x) = 1-\cos x\geq 0$, and $>0$ for small positive $x$. Integrating we obtain $f^{(4)}(x)=\int_0^x f^{(5)}(t)\,dt > 0$, then integrating again we get $f'''(x)>0$ and so forth, until we obtain $f(x)>0$ for all $x>0$.\footnote{Instead we could have used Taylor's formula with remainder for the function $g(x)=\sin x$:
$$ g(x)= \sum_{k=0}^4 \frac{x^k}{k!} + \frac1{4!}\int_0^t (x-t)^4 g^{(5)}(t)\,dt$$
which using $g^{(5)}(t)=\cos t \leq 1$ (and $<1$ for small positive $t$) and
$\int_0^t (x-t)^4\,dt = \frac15 x^5$ yields the same result. Yet another proof uses Leibniz' criterion for the Taylor series $x-\frac{x^3}{3!}+\frac{x^5}{5!} +\dots$ of $\sin x$, which is alternating. The terms after the fifth power are monotonically decreasing in absolute value if $\frac{x^{2n+1}}{(2n+1)!}<\frac{x^{2n-1}}{(2n-1!}$ for $n\geq4$, which is equivalent to $x^2<2n(2n+1)$, hence true for $x<\sqrt{72}$. Since the first omitted term after $\frac{x^5}{5!}$  is negative, we get that the sum of the series, which is $\sin x$, is less than  $x-\frac{x^3}{3!}+\frac{x^5}{5!}$, at least for $x<\sqrt{72}$. Since $\sqrt{72}>\pi$, this is enough for our purpose.
}

We now plug \eqref{eqn sin} into the left hand side of \eqref{eqn convex}. A rather tedious calculation shows that the result, which starts as $3x^4 - \frac1{15}x^8+\dots$, is less than $3x^4$ for $x<4$. 
The main point is that the second term is negative. Among the higher terms some are positive, but they can easily be estimated against the negative ones.

\subsection*{A few open problems}
The way in which the Dirichlet eigenvalues determine the triangle is somewhat indirect: First one constructs the heat kernel $h$, see \eqref{heat kernel}, and then one considers the coefficients in its asymptotic expansion to prove the result. In particular, one needs to know (asymptotic information on) {\em all}  the eigenvalues for this. It is natural to ask whether already a finite number of eigenvalues, ideally only three, suffice to determine the triangle. 
\begin{problem}
 Do the first three Dirichlet eigenvalues $\lambda_1,\lambda_2,\lambda_3$ determine a triangle?
\end{problem}
Numerical evidence was provided in \cite{AntFre:ISPET} that this is true -- but that the corresponding statement for $\lambda_1,\lambda_2,\lambda_4$ is false. However, no proof of this is known. As a partial result in this direction it is proved in \cite{ChaDeT:HST} that for each $\varepsilon>0$ there is a number $N$ so that $\lambda_1,\dots,\lambda_N$ determine a triangle uniquely among all triangles whose angles are all greater than or equal to $\varepsilon$.

\begin{problem}
 Is there a closed path (not hitting a corner) on every triangle?
\end{problem}
For acute triangles the answer is yes, see Figure \ref{fig:fagnano}. The problem is open for general obtuse triangles.

\begin{problem}
 Does the second Neumann eigenfunction on an obtuse triangle have its extrema on the boundary?
\end{problem}
This is conjectured to be true, and is a special case of the {\em hot spots conjecture}. See the recent discussion on Polymath, \cite{Polymath:HSC}. 

Let us mention two other open questions on the inverse spectral problem.
\begin{problem}
Can one hear the shape of a convex polygon? Can one hear the shape of a domain $\Omega\subset\R^2$ with smooth boundary?
\end{problem}
\noindent We emphasize that the answer is no when convexity or smoothness is not required: All known counterexamples to 'Can one hear the shape of drum?' are non-convex polygons, cf. Figure \ref{fig:counterexample}.

\bibliographystyle{plain}
\bibliography{dglib}
\end{document}